\documentclass[11pt]{amsart}

\usepackage{geometry}
 \geometry{
 a4paper,
 total={150mm,237mm},
 left=30mm,
 top=30mm,
 }

\usepackage{amsmath}
\usepackage{amssymb}

\newtheorem{theorem}{Theorem}
\newtheorem{corollary}{Corollary}
\newtheorem{lemma}{Lemma}

\theoremstyle{remark}
\newtheorem{example}{Example}
\newtheorem*{remark}{Remark}

\renewcommand{\leq}{\leqslant}
\renewcommand{\geq}{\geqslant}

\title{Irreducible polynomials over a finite field with restricted coefficients}

\author[S. Porritt]{Sam Porritt}
\address{Department of Mathematics\\University College London\\
25 Gordon Street, London, England}
\email{samuel.porritt.15@ucl.ac.uk}

\begin{document}
\maketitle

\begin{abstract}
We prove a function field analogue of Maynard's celebrated result about primes with restricted digits. That is, for certain ranges of parameters the parameters $n$ and $q$, we prove an asymptotic formula for the number of irreducible polynomials of degree $n$ over a finite field $\mathbb{F}_q$ whose coefficients are restricted to lie in a given subset of $\mathbb{F}_q$.
\end{abstract}


\section{Introduction}

Many theorems concerning the existence of irreducible polynomials over a finite field of a special form have been proved. A discussion of such results can be found in~\cite{TuWa}. In this paper we will prove a function field analogue of a result of Maynard~\cite{May} concerning primes with missing digits. He proved that for large enough integers $b$, the primes have the expected asymptotic density inside those integers which can be written in base $b$ using only certain specified digits. We will prove the following natural analogue for polynomials in $\mathbb{F}_q[t]$.

\begin{theorem} \label{thm}
Let $\mathcal{R} \subset \mathbb{F}_q$ be a subset of size $s$ and assume that $s$ is less than $\sqrt{q}/2$. Suppose that $q \geq 500 $ and $ n \geq 100(\log q)^2 $. The number of irreducible, monic polynomials of degree $n$ with coefficients only from $\mathbb{F}_q \backslash \mathcal{R}$ (except possibly the leading 1) is given by
\[\frac{ q }{q-1}\frac{(q-s)^n}{n}\left( \Lambda + O\left( q^{-n^{1/2}/7} \right) \right),\]
where
\[ \Lambda = \begin{cases}
1 & \text{ if } 0 \in \mathcal{R} \\
1-\frac{1}{q-s} & \text{ if } 0 \notin \mathcal{R}.
\end{cases} \]
\end{theorem}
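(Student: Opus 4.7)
The plan is Fourier-analytic: expand the digit-restriction indicator via additive characters of $\mathbb{F}_q$ and reduce the problem to estimating exponential sums over monic irreducibles of degree $n$. Fix a nontrivial additive character $\psi\colon \mathbb{F}_q \to \mathbb{C}^{\times}$ and write
\[ \mathbf{1}_{\mathbb{F}_q \setminus \mathcal{R}}(a) = \frac{1}{q}\sum_{\alpha \in \mathbb{F}_q} c(\alpha)\,\psi(\alpha a), \]
where $c(0) = q-s$ and $c(\alpha) = -\sum_{r \in \mathcal{R}}\psi(-\alpha r)$ for $\alpha \neq 0$; in particular $|c(\alpha)| \leq s$ for $\alpha \neq 0$ and $\sum_{\alpha}|c(\alpha)|^2 = q(q-s)$ by Parseval. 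Identifying each monic $f = t^n + \sum_{i<n} a_i(f)t^i$ with its coefficient vector $(a_0(f),\dots,a_{n-1}(f)) \in \mathbb{F}_q^n$, the target quantity becomes
\[ N = \frac{1}{q^n} \sum_{\boldsymbol{\alpha} \in \mathbb{F}_q^n} \Bigl(\prod_{i=0}^{n-1} c(\alpha_i)\Bigr) S(\boldsymbol{\alpha}), \qquad S(\boldsymbol{\alpha}) := \sum_{\substack{f \in M_n \\ f\text{ irred}}} \psi\Bigl(\textstyle\sum_i \alpha_i\, a_i(f)\Bigr). \]

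I would first isolate the main term, which comes from frequencies $\boldsymbol{\alpha}$ supported on the zeroth coordinate. The $\boldsymbol{\alpha}=0$ term contributes $(q-s)^n \pi_q(n)/q^n = (q-s)^n/n \cdot (1 + O(q^{-n/2}))$ via the prime polynomial theorem. For $\boldsymbol{\alpha} = (\alpha_0,0,\dots,0)$ with $\alpha_0 \neq 0$, $S(\boldsymbol{\alpha})$ reduces to an exponential sum over the constant terms of monic irreducibles of degree $n$; the near-equidistribution of these constants over $\mathbb{F}_q^{\times}$ (from Weil's bound for Dirichlet characters mod $t$) yields $S(\boldsymbol{\alpha}) = -\pi_q(n)/(q-1) + O(q^{n/2+1})$. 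Since $\sum_{\alpha_0 \neq 0} c(\alpha_0)$ equals $-(q-s)$ if $0 \in \mathcal{R}$ and $s$ otherwise, adding this contribution to the $\boldsymbol{\alpha}=0$ term produces precisely the predicted main term $\frac{q}{q-1}\frac{(q-s)^n}{n}\Lambda$, the two cases of $\Lambda$ reflecting exactly these two cases of the Fourier sum.

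For the error, the remaining $\boldsymbol{\alpha}$ have at least one nonzero coordinate in a position $i \geq 1$; then $f\mapsto \psi(L_{\boldsymbol{\alpha}}(f))$ is a nontrivial character on $M_n$ whose conductor combines a modulus-$t^k$ condition at $0$ with a short-interval condition at $\infty$. By Weil's Riemann hypothesis for the associated Dirichlet $L$-functions over $\mathbb{F}_q[t]$, one obtains $|S(\boldsymbol{\alpha})| \ll n q^{n/2}$. I would then stratify these $\boldsymbol{\alpha}$ by the number $k \geq 1$ of their nonzero coordinates among $i \geq 1$, and bound the stratum-$k$ contribution by combining the Weil estimate with the Cauchy--Schwarz/Parseval bound $\sum_{\alpha\neq 0}|c(\alpha)| \leq \sqrt{(q-1)(q-s)s}$. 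The hypothesis $s \leq \sqrt{q}/2$ keeps the ratio $\sqrt{(q-1)(q-s)s}/(q-s)$ bounded by $O(q^{1/4})$, which is the key quantitative input; summing over $k$ and optimizing the balance between the combinatorial factor $\binom{n-1}{k}$ and the $q^{-n/2}$ oscillation saving leads, after a short calculation, to the announced error $O(q^{-n^{1/2}/7})$, with the critical stratum occurring at $k \asymp \sqrt{n}$.

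The principal obstacle is the uniform control of $S(\boldsymbol{\alpha})$ across all non-degenerate $\boldsymbol{\alpha}$: while Weil's bound gives sharp square-root cancellation in isolation, one must verify carefully that the dependence on the character's conductor is only polynomial in $n$, so that these polynomial losses are absorbed by the Fourier-side savings. The parameter regime $q \geq 500$ and $n \geq 100(\log q)^2$ is calibrated precisely so that this absorption goes through, leaving the final exponent $n^{1/2}/7$ as the natural output of the optimization.
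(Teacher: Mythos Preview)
Your setup is the circle method the paper uses---your $\boldsymbol{\alpha}\in\mathbb{F}_q^n$ is the point $x=\sum_i\alpha_it^{-i-1}\in\mathbb{T}$, your $S(\boldsymbol{\alpha})$ is the paper's $S(x)$---and your main-term extraction from frequencies supported at $i=0$ correctly reproduces the paper's $g=1,\,t$ contributions.

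The gap is the claimed uniform Weil bound $|S(\boldsymbol{\alpha})|\ll nq^{n/2}$ for all $\boldsymbol{\alpha}$ with a nonzero entry at some $i\ge1$: this is false. For $c\in\mathbb{F}_q^\times$ and $\alpha_i=c^i$ the associated $x$ is $1/(t-c)$, and equidistribution of irreducibles modulo $t-c$ gives $S(\boldsymbol{\alpha})=-\pi(n)/(q-1)+O(q^{3n/4})$, of size $\sim q^n/(nq)$ rather than $nq^{n/2}$. The point is that although $f\mapsto\psi(L_{\boldsymbol{\alpha}}(f))$ is a nontrivial \emph{additive} character on $M_n$, when it is expanded into multiplicative Dirichlet characters modulo $g$ (where $a/g$ is the reduced form of $x$) the principal character appears with Ramanujan-sum coefficient $\mu(g)/\phi(g)$, and that piece carries no Weil cancellation. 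Your ``conductor $t^k\times\infty$'' heuristic misses that the true conductor can collapse to a small $g$. The paper's Lemma~1 accordingly keeps the term $\mu(g)\pi(n)/\phi(g)$ for \emph{every} Farey fraction, and the savings for $\deg g\ge2$ come instead from the digit side: Lemma~3 shows $|S_{\mathcal R}(a/g)|\le(q-s)^{n-[n/d]}s^{[n/d]}$ because at least $[n/d]$ of the $t$-adic digits of $a/g$ are nonzero, and the balance between the $\sim q^d$ fractions of denominator degree $\le d$ and the $(s/(q-s))^{n/d}$ saving (together with the $L^1$ bound of Lemma~4 for large $d$) is what forces the exponent $\sqrt n$, with the crossover at $d\asymp\sqrt n$. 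Note too that your own uniform bound, if it held, would yield an error exponentially small in $n$ with no optimisation needed, not $q^{-c\sqrt n}$; the mismatch with your stated ``critical stratum $k\asymp\sqrt n$'' is another symptom that the bound cannot be right.
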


\begin{remark} Beyond stipulating that $s \leq \sqrt{q}$, the constraints on the sizes of $s,q$ and $n$ are somewhat artificial, and were chosen with the aim of producing a more presentable error term. A more complicated, but more widely applicable error term is presented at the end of section 4 from which the following two examples follow.
\end{remark}

\begin{example}
In the special case of $s=1$, we get an asymptotic formula for any $q \geq 17$. In particular, we show that the number of irreducible polynomials of degree $n$ with a single coefficient from $\mathbb{F}_{17}$ unavailable is asymptotic to $\Lambda\frac{16}{17}(16)^n/n$ as $n \rightarrow \infty$.
\end{example}

\begin{example}
An asymptotic formula still holds in the case of fixed $n$ and $q \rightarrow \infty$ provided that $s = o(q^{1/2}).$
\end{example}

As in the integer setting, we can take $s$ to be larger when the set $\mathcal{R}$ has additional structure. For example, in section 5 we will prove the following.

\begin{theorem} \label{thm2}
Suppose $\delta > 0$ and $p$ is a prime sufficiently large in terms of $\delta$. Then for any subset $\mathcal{R} = \{r, r+1, \ldots, r+s-1\} \subset \mathbb{F}_p$ of $s$ consecutive coefficients with $p-s > p^{3/4 + \delta}$, the number of irreducible, monic polynomials of degree $n$ with coefficients only from $\mathbb{F}_p \backslash \mathcal{R}$ (except possibly the leading 1) is given by
\[\frac{ p }{p-1}\frac{(p-s)^n}{n}\left( \Lambda + O\left(e^{-c n^{1/2}}\right)\right),\]
for some positive constant $c$ depending on $p$ and $\delta$.
\end{theorem}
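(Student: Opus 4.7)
The plan is to rerun the Fourier-analytic proof of Theorem~1, keeping everything except the bound on the Fourier transform of $g = \mathbf{1}_{\mathbb{F}_p \setminus \mathcal{R}}$: we replace the generic $L^1$ estimate by a much sharper one which makes essential use of the interval structure of $\mathcal{R}$. The character sum input of Section~3 and the constant-coefficient analysis producing the factor $\Lambda$ both carry over verbatim.

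Fix a nontrivial additive character $\psi$ of $\mathbb{F}_p$ and set $g = \mathbf{1}_{\mathbb{F}_p \setminus \mathcal{R}}$. Writing $\hat g(a) = p^{-1}\sum_x g(x)\psi(-ax)$, one has $\hat g(0) = (p-s)/p$ and, for $a \neq 0$,
\[\hat g(a) = -\frac{\psi(-ar)}{p}\cdot\frac{1-\psi(-as)}{1-\psi(-a)},\]
so $|\hat g(a)| \leq (p|\sin(\pi a/p)|)^{-1}$. Summing and applying the standard Dirichlet kernel estimate gives
\[\sum_{a\neq 0} |\hat g(a)| \ll \log p,\]
which is far superior to the generic bound $\sum_{a\neq 0}|\hat g(a)| \ll s$ available for an arbitrary $\mathcal{R}$ of size $s$ as soon as $s \gg \log p$. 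This is the only place where the interval hypothesis on $\mathcal{R}$ enters.

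Exactly as in Section~4, expanding the indicator $\prod_i g(a_i(f))$ via Fourier inversion and summing over monic irreducibles of degree $n$ reduces the problem to estimating
\[\sum_{\vec a \in \mathbb{F}_p^n}\prod_i \hat g(a_i) \cdot T(\vec a), \qquad T(\vec a) = \sum_{\substack{f \text{ monic, irred.}\\ \deg f = n}}\psi\Bigl(\sum_i a_i [t^i]f\Bigr).\]
The diagonal $\vec a = \vec 0$ contribution, combined with the prime polynomial theorem and the same analysis of the constant coefficient as in the proof of Theorem~1 (which, via the equidistribution of $a_0$ among monic irreducibles, supplies the factor $\frac{p}{p-1}\Lambda$), produces the asserted main term $\frac{p}{p-1}\frac{(p-s)^n}{n}\Lambda$. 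For $\vec a \neq \vec 0$, the Weil-type bound $|T(\vec a)| \ll p^{n/2}$ from Section~3 still applies, so the total contribution of non-trivial characters is at most
\[p^{n/2}\Bigl[\Bigl(\sum_a |\hat g(a)|\Bigr)^n - \hat g(0)^n\Bigr] \ll p^{n/2}(C\log p)^n.\]
Dividing by the main term of size $(p-s)^n/n$ and invoking $p-s \geq p^{3/4+\delta}$, the relative error is at most $n\bigl(C\log p\cdot p^{-1/4-\delta}\bigr)^n$, which for $p$ sufficiently large in terms of $\delta$ decays like $\exp(-cn)$ with $c = c(p,\delta)>0$; this comfortably dominates the claimed $\exp(-cn^{1/2})$.

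The principal obstacle is therefore the Weil-type bound $|T(\vec a)| \ll p^{n/2}$ itself, but this is precisely the estimate already established in Section~3 for the proof of Theorem~1, and requires no modification. Everything else---the interval Fourier bound, the constant-coefficient correction, and the final error comparison---is either elementary or directly inherited from the proof of Theorem~1.
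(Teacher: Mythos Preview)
There is a genuine gap: the uniform bound $|T(\vec a)| \ll p^{n/2}$ for all nonzero $\vec a$ is not established in Section~3 and is in fact false. Lemma~1 says that on the Farey arc around $a/g$ one has
\[
S\Bigl(\tfrac{a}{g}+\gamma\Bigr)=\frac{\mu(g)}{\phi(g)}\pi(n)\,\textbf{e}_q(\gamma t^n)\,\textbf{1}_{|\gamma|<q^{-n}}+E,\qquad |E|\le q^{\,n-\frac12[n/2]}\approx q^{3n/4},
\]
so the Weil error is of size $q^{3n/4}$, not $q^{n/2}$, and more importantly the main term is nonzero whenever $g$ is squarefree. For any squarefree $g$ of small degree this main term has size $\pi(n)/\phi(g)\asymp p^n/(n\,\phi(g))$, far larger than $p^{n/2}$. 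A concrete witness is $\vec a=(1,1,\ldots,1)$, which sits on the major arc at $1/(t-1)$: here $T(\vec a)=\psi(-1)\sum_f\psi(f(1))\sim -\pi(n)/(p-1)$ by equidistribution of $f(1)$ in $\mathbb{F}_p^\times$. Extracting only the $g=1$ and $g=t$ contributions (your ``constant coefficient'' step) does not neutralise these.

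Your interval $L^1$ estimate on $\hat g$ is exactly the paper's Corollary~2, and it does control the contribution of the Weil error $E$; this produces the term $\bigl(p^{3/4}(\log p+1-s/p)/(p-s)\bigr)^n$ in the paper's final bound for $\mathcal{E}$, which by itself is indeed $e^{-cn}$ once $p-s>p^{3/4+\delta}$. What you have omitted is the treatment of the major-arc main terms for squarefree $g$ other than $1$ and $t$. The paper supplies two further ingredients for this: a pointwise bound $|S_\mathcal{R}(a/g)|\le (p-s)^n e^{-[n/d]/p^3}$ at rationals of small denominator $d$ (Lemma~6, again exploiting the interval structure), and the $L^1$-over-Farey-fractions bound of Lemma~4 combined with Corollary~2 for larger $d$. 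Balancing these two ranges at a cutoff $U\asymp n^{1/2}$ is precisely what forces the final error down to $e^{-cn^{1/2}}$ rather than $e^{-cn}$; the stronger exponent you obtained is a symptom that this major-arc analysis is missing.
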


The integer version of Theorem~\ref{thm} was proved in~\cite{May} under the assumption that the number of restricted digits $s$ satisfies $s \leq b^{1/4 -\delta}$ and the base $b$ is sufficiently large in terms of $\delta$. An analogue of Theorem~\ref{thm2} was proved under the assumption that $\mathcal{R} = \{0, 1, \ldots, s-1\}$ and $s \leq b - b^{3/4 + \delta}$. The proofs of Theorems~\ref{thm} and~\ref{thm2} will use the circle method over $\mathbb{F}_q[t]$ along the lines of~\cite{Ha} and~\cite{May}. Two features make our arguments substantially simpler. First, we may make use of Weil's Riemann hypothesis for curves over a finite field which gives very good control for exponential sums over irreducibles. Second, we don't have to deal with any technicalities which arise from the fact that sometimes digits are `carried' when rational integers are added. This doesn't happen with polynomials over a finite field.

For an overview of digit related results in the integers, see the recent work of Dietmann, Elsholtz and Shparlinski~\cite{DiElSh} which also contains a section on finite fields, improving an earlier result of  Dartyge, Mauduit and S\'{a}rk\"{o}zy~\cite{DaMaS}. See also~\cite{OpSh}, which contains an extensive list of references to related problems.

\section{Definitions and set up}

This section introduces some notation we will be using. Let $q$ be a prime power and $\mathbb{F}_q$ be the field with $q$ elements and characteristic $p$. Let $\mathcal{R}=\{r_1, \ldots, r_s\}\subset \mathbb{F}_q$ be a subset of \emph{forbidden} coefficients. We are interested in counting monic (sometimes called positive) irreducible polynomials in $\mathbb{F}_q[t]$ of degree $n$, all of whose coefficients, apart from possibly the leading 1, are in the set $\mathcal{R}^c:=\mathbb{F}_q \backslash \mathcal{R}$. The function field analogue of the real numbers is the completion of the field of fractions of $\mathbb{F}_q[t]$ with respect to the norm defined by
\[|f/g| = \begin{cases}
q^{\deg f - \deg g} & \text{ if } f\neq 0 \\
0 & \text{ otherwise .}
\end{cases}\]
This completion is naturally identified with the ring of formal Laurent series $\mathbb{F}_q((1/t))=\{ \; \sum_{i\leq j} x_i t^i \: : \: x_i \in \mathbb{F}_q, \: j \in \mathbb{Z} \; \}.$ The norm defined above is extended to $x = \sum_{ i \leq j } x_i t^i \in \mathbb{F}_q((1/t))$ by setting $|x| = q^j$ where $j$ is the largest index with $x_j \neq 0$. The subscript notation $x_i$ will be used again to refer to the coefficient of $t^i$ in $x$. The analogue of the real unit interval is $\mathbb{T}:=\{ \sum_{i < 0} x_i t^i \: : \: x_i \in \mathbb{F}_q \},$ and is a subring of $\mathbb{F}_q((1/t))$.
Define $\psi : \mathbb{F}_q \rightarrow \mathbb{C}^\times$ by
\[\psi(a) = \exp(2\pi i\text{tr}(a)/p)\]
where $\text{tr} : \mathbb{F}_q \rightarrow \mathbb{F}_p$ is the usual trace map. Define also the additive character $\textbf{e}_q : \mathbb{F}_q((1/t))\rightarrow \mathbb{C}^\times$ by \[\textbf{e}_q (x) = \psi(x_{-1}).\]
Fix a Haar measure on the additive group $\mathbb{T}$ normalised so that $\int_{\mathbb{T}}d x =1$. Then for all $a \in \mathbb{F}_q[t]$, we have
\[\int_{\mathbb{T}}\textbf{e}_q(a x)d x = \begin{cases} 1 & \text{ if } a = 0 \\ 0 & \text{ if } a \neq 0. \end{cases}\]
For $x \in \mathbb{T}$, define the sum over monic irreducible polynomials of degree $n$
\[ S(x) = \sum_{\deg p = n} \textbf{e}_q(p x).\] Let $\mathcal{M}_\mathcal{R}(n)$ be the set of monic polynomials of degree $n$ with non-leading coefficients taken from $\mathcal{R}^c$ and define
\[ S_\mathcal{R}(x) = \sum_{m \in \mathcal{M}_\mathcal{R}(n) } \textbf{e}_q(m x).\] So $S(x)$ and $S_\mathcal{R}(x)$ depend on $n$ even though this is not apparent from the notation. The main quantity of interest, the number of irreducible polynomials in $\mathcal{M}_\mathcal{R}(n)$, is then given by
\begin{equation}
N(\mathcal{R},n) = \int_{\mathbb{T}} S(x) \overline{S_\mathcal{R}(x)} dx.
\end{equation}
We will make use of the important fact that for each $x \in \mathbb{T}$, there exist unique $a, g \in \mathbb{F}_q[t]$ with $g$ monic, $a$ and $g$ coprime, and $|a| < |g| \leq q^{n/2}$ such that
\[ \left| x - \frac{a}{g} \right| < \frac{1}{q^{\deg g + n/2}}.\]
This fact is Lemma 3 from~\cite{Pol}. It implies that we can partition $\mathbb{T}$ into the so-called Farey arcs as
\[\mathbb{T} = \bigcup_{\substack{ |a|<|g| \leq q^{n/2} \\ (a,g)=1 }} \mathcal{F}\left(\frac{a}{g}\:,\:q^{\deg g + n/2}\right)\]
where $\mathcal{F}\left(\frac{a}{g}\:, \:\lambda\right) = \{ x \in \mathbb{T} \: : \: \big| \frac{a}{g} - x \big| < \frac{1}{\lambda} \}.$

As usual we let $\mu(f)$ denote the M\"{o}bius function, defined as $(-1)^k$ if $f$ is the product of $k$ distinct irreducibles and 0 otherwise. Let $\phi(f)$ be the size of the unit group $(\mathbb{F}_q[t]/(f))^\times$, that is $|f|\prod_{ \omega |f}(1-1/|\omega|)$, where the product is over all monic irreducibles dividing $f$. Finally, let $\pi(n)$ be the number of monic, irreducible polynomials of degree $n$ and recall the prime number theorem in the form $\sum_{d|n}d\pi(d) = q^n$.

\section{Lemmas}

The sum $S(x)$ was analysed in~\cite{Hay}. Our first lemma is Lemma 5 in~\cite{Pol} and is a consequence of Weil's Riemann Hypothesis for curves over a finite field.

\begin{lemma} \label{l1}
Let $a,g \in \mathbb{F}_q[t]$ be two polynomials with $(a,g)=1$ and $\gamma \in \mathbb{T}$, satisfying $|a|<|g| \leq q^{n/2}$ and $|\gamma|<1/q^{\deg g + n/2}$. We have
\[ S\left(\frac{a}{g}+\gamma \right)=\frac{\mu(g)}{\phi(g)}\pi(n) \textbf{e}_q(\gamma t^n)\textbf{1}_{|\gamma|<1/q^{n}}+E \]
with $|E| \leq q^{n-\frac{1}{2}[\frac{n}{2}]}$.
\end{lemma}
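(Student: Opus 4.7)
The strategy is to isolate the contributions of the rational approximant $a/g$ and the perturbation $\gamma$ and then control the resulting character sums via Weil's Riemann Hypothesis for Dirichlet L-functions over $\mathbb{F}_q[t]$. Factoring $\textbf{e}_q(p(a/g+\gamma)) = \textbf{e}_q(pa/g)\,\textbf{e}_q(p\gamma)$ and writing a monic $p = t^n + \sum_{i=0}^{n-1} a_i t^i$ of degree $n$, the coefficient of $t^{-1}$ in $p\gamma$ equals $\gamma_{-n-1} + \sum_{i=0}^{n-1} a_i\gamma_{-i-1}$, so
\[ \textbf{e}_q(p\gamma) \;=\; \textbf{e}_q(t^n\gamma)\,\psi\!\left(\sum_{i=0}^{n-1} a_i\,\gamma_{-i-1}\right). \]
The indicator $\textbf{1}_{|\gamma|<1/q^n}$ in the statement is exactly the condition $\gamma_{-1}=\cdots=\gamma_{-n}=0$, i.e.\ that the second factor above is identically $1$.

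In the regime $|\gamma|<1/q^n$ the sum collapses to $\textbf{e}_q(t^n\gamma)\sum_{\deg p = n}\textbf{e}_q(pa/g)$. I would then split the inner sum by residue classes modulo $g$ (absorbing the $O(\deg g)$ irreducibles dividing $g$ into the error) and expand the additive character in Dirichlet characters $\chi$ modulo $g$:
\[ \sum_{\substack{\deg p = n \\ (p,g)=1}}\textbf{e}_q(pa/g) \;=\; \frac{1}{\phi(g)}\sum_{\chi\bmod g}\tau(\bar\chi,a)\sum_{\deg p = n}\chi(p), \]
where $\tau(\bar\chi,a) = \sum_{(h,g)=1}\bar\chi(h)\textbf{e}_q(ha/g)$. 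The principal character contributes the Ramanujan sum $c_g(a)=\mu(g)$ (valid since $(a,g)=1$), producing the main term $\mu(g)\pi(n)/\phi(g)$. For non-principal $\chi$, Weil's theorem applied to $L(s,\chi)$ — a polynomial in $q^{-s}$ of degree at most $\deg g - 1$ with reciprocal roots of modulus $q^{1/2}$ — gives $|\sum_{\deg p = n}\chi(p)| \ll (\deg g)\,q^{n/2}/n$. Combined with the Gauss-sum bound $|\tau(\bar\chi,a)|\leq |g|^{1/2}$ and the count $\phi(g)-1$ of non-principal characters, the total non-principal contribution is of order $|g|^{1/2}(\deg g)\,q^{n/2}/n$, which with $|g|\leq q^{n/2}$ is of the claimed magnitude $q^{n-[n/2]/2}$.

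In the remaining regime $|\gamma|\geq 1/q^n$ there is no main term, and the additive map $p\mapsto\textbf{e}_q(p(a/g+\gamma))$ depends non-trivially both on the residue of $p$ modulo $g$ and on the top digits $a_{m-1},\ldots,a_{n-1}$ of $p$, where $m=-\log_q|\gamma|$. Expanding this character in the multiplicative Hayes characters of modulus $g\cdot\infty^{k}$ (with $k \leq n-m+1 \leq n/2-\deg g$), and applying Weil's Riemann Hypothesis to the associated L-functions (polynomials in $q^{-s}$ of degree $O(\deg g + n/2)$ with reciprocal roots of modulus $q^{1/2}$), yields a bound of the same order $q^{n-[n/2]/2}$. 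The main obstacle throughout is the careful bookkeeping of conductor degrees, Gauss-sum contributions and the counting of non-trivial characters, so that both regimes of $|\gamma|$ yield a single uniform error of the stated shape; since the statement is Lemma 5 of \cite{Pol}, which in turn builds on Hayes' analysis \cite{Hay} of $S(x)$, I would defer that detailed computation to those references.
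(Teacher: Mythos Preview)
The paper does not prove this lemma at all: it simply states that ``Our first lemma is Lemma 5 in~\cite{Pol} and is a consequence of Weil's Riemann Hypothesis for curves over a finite field.'' Your proposal correctly identifies this citation and gives a reasonable sketch of the underlying argument (the split into $|\gamma|<1/q^n$ versus $|\gamma|\geq 1/q^n$, the expansion into Dirichlet/Hayes characters, and the appeal to Weil bounds for the associated $L$-functions), before deferring the details to the same references; so your approach is entirely consistent with the paper's treatment.
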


For a subset $A \subset \mathbb{F}_q$, define the Fourier coefficient $\widehat{\textbf{1}_{A}}(r) := \sum_{n \in A} \psi(nr).$ It turns out that the average value of $|S_\mathcal{R}(x)|$ can be written quite neatly in terms of the Fourier coefficients of the set $\mathcal{R}^c$.

\begin{lemma} \label{l2}
\[\int_{\mathbb{T}} |S_\mathcal{R}(x)|dx = \left(\sum_{r \in \mathbb{F}_q}|\widehat{\textbf{1}_{\mathcal{R}^c}}(r)|/q\right)^n. \]
\end{lemma}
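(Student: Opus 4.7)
The plan is to exploit the product structure of $S_\mathcal{R}$ coming from the fact that the coefficients of $m \in \mathcal{M}_\mathcal{R}(n)$ vary independently. Write a generic element of $\mathcal{M}_\mathcal{R}(n)$ as $m = t^n + \sum_{i=0}^{n-1} c_i t^i$ with $c_i \in \mathcal{R}^c$. Since $\textbf{e}_q$ is an additive character, this factorises $S_\mathcal{R}$ as
\[ S_\mathcal{R}(x) = \textbf{e}_q(t^n x)\prod_{i=0}^{n-1}\sum_{c \in \mathcal{R}^c}\textbf{e}_q(c t^i x). \]

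The next step is to identify each inner sum with a Fourier coefficient of $\textbf{1}_{\mathcal{R}^c}$. Unpacking the definition of $\textbf{e}_q$, the coefficient of $t^{-1}$ in $c t^i x$ is $c \cdot x_{-1-i}$, so
\[ \sum_{c \in \mathcal{R}^c}\textbf{e}_q(c t^i x) = \sum_{c \in \mathcal{R}^c}\psi(c\, x_{-1-i}) = \widehat{\textbf{1}_{\mathcal{R}^c}}(x_{-1-i}). \]
Taking absolute values and using $|\textbf{e}_q(t^n x)|=1$ gives $|S_\mathcal{R}(x)| = \prod_{i=0}^{n-1} \bigl|\widehat{\textbf{1}_{\mathcal{R}^c}}(x_{-1-i})\bigr|$.

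Finally I would integrate over $\mathbb{T}$. Under the normalised Haar measure, the coordinate functions $x \mapsto x_{-j}$ for $j \geq 1$ are independent and uniformly distributed on $\mathbb{F}_q$; this is the standard description of the Haar measure on $\mathbb{T} \cong \mathbb{F}_q^{\mathbb{N}}$ used earlier in the paper to verify the orthogonality relation for $\textbf{e}_q$. Applying Fubini and integrating each of the $n$ independent coordinates $x_{-1}, \dots, x_{-n}$ separately yields
\[ \int_{\mathbb{T}}|S_\mathcal{R}(x)|\,dx \;=\; \prod_{i=0}^{n-1}\frac{1}{q}\sum_{r\in\mathbb{F}_q}\bigl|\widehat{\textbf{1}_{\mathcal{R}^c}}(r)\bigr| \;=\; \left(\sum_{r\in\mathbb{F}_q}\bigl|\widehat{\textbf{1}_{\mathcal{R}^c}}(r)\bigr|/q\right)^{n}, \]
which is the claimed identity. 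There is no real obstacle here: the only thing to be careful about is tracking indices when converting $\textbf{e}_q(c t^i x)$ into $\psi(c\, x_{-1-i})$ and verifying that the $n$ coordinates of $x$ which appear are genuinely distinct so that independence applies.
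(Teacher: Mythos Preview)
Your proposal is correct and follows essentially the same route as the paper: both factorise $S_\mathcal{R}(x)$ via the independent coefficients, identify each factor with $\widehat{\textbf{1}_{\mathcal{R}^c}}(x_{-1-i})$, and then integrate. The only cosmetic difference is that the paper carries out the integration by explicitly discretising---observing that $|S_\mathcal{R}(x)|$ depends only on $(x_{-1},\dots,x_{-n})$ and hence $\int_{\mathbb{T}}|S_\mathcal{R}(x)|\,dx = q^{-n}\sum_{\deg a < n}|S_\mathcal{R}(a/t^n)|$---whereas you phrase the same computation in terms of independent uniform coordinates and Fubini; these are equivalent formulations of the Haar measure on $\mathbb{T}$.
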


\begin{proof}
First
\begin{align*}
S_\mathcal{R}(x) &= \sum_{m \in \mathcal{M}_\mathcal{R}(n) } \textbf{e}_q(m x) \\
&= \textbf{e}_q(x t^n) \prod_{i=0}^{n-1}\left(\sum_{n_i\in \mathcal{R}^c}\textbf{e}_q(x n_i t^i)\right) \\
&= \textbf{e}_q(x t^n) \prod_{i=0}^{n-1}\left(\sum_{n_i\in \mathcal{R}^c} \psi(n_i x_{-i-1})\right).\\
\end{align*}
Notice that $|S_\mathcal{R}(x)|$ only depends on the leading $n$ coefficients $(x_{-1}, \ldots, x_{-n})$ of $x$ and so, for each $a \in \mathbb{F}_q[t]$, $|S_\mathcal{R}(a/t^n+\gamma)|$ is constant in the range $|\gamma| < 1/q^n$, a set of measure $1/q^n$. Therefore,

\begin{align*}
\int_{\mathbb{T}}\big|S_{\mathcal{R}}(x)\big| d x & = \frac{1}{q^n}\sum_{\deg a < n}\big|S_{\mathcal{R}}\left(\frac{a}{t^n}\right)\big| \\
& = \frac{1}{q^n} \sum_{\deg a < n}\left|\prod_{i=0}^{n-1}\sum_{n_i \in \mathcal{R}^c}\psi(n_i a_{n-i-1})\right| \\
& = \frac{1}{q^n}\sum_{\deg a < n}\prod_{i=0}^{n-1}\left|\widehat{1_{\mathcal{R}^c}}(a_{n-i-1})\right| \\
& = \frac{1}{q^n}\left(\sum_{r\in \mathbb{F}_q}\left|\widehat{\textbf{1}_{\mathcal{R}^c}}(r)\right|\right)^n
\end{align*}
which completes the proof of the lemma.
\end{proof}

\begin{corollary}\label{c1}
\[\int_{\mathbb{T}} |S_\mathcal{R}(x)|dx \leq (\sqrt{s}+1 -2s/q)^n,\]
with equality in the case $s=1$.
\end{corollary}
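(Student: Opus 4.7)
By Lemma~\ref{l2}, the claim reduces to showing
\[\frac{1}{q}\sum_{r \in \mathbb{F}_q}\big|\widehat{\textbf{1}_{\mathcal{R}^c}}(r)\big| \leq \sqrt{s}+1-\frac{2s}{q},\]
with equality when $s=1$. So my plan is to estimate this average of Fourier coefficients by isolating the contribution at $r=0$ and bounding the rest via Cauchy--Schwarz and Parseval.

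First I would separate the $r=0$ term, which contributes $\widehat{\textbf{1}_{\mathcal{R}^c}}(0) = q-s$. For the remaining $r \in \mathbb{F}_q^\times$, orthogonality of the additive character gives $\sum_{n \in \mathbb{F}_q}\psi(nr)=0$, hence $\widehat{\textbf{1}_{\mathcal{R}^c}}(r) = -\widehat{\textbf{1}_{\mathcal{R}}}(r)$, so the two families of Fourier coefficients agree in absolute value away from $0$. Then by Cauchy--Schwarz,
\[\sum_{r \neq 0}\big|\widehat{\textbf{1}_{\mathcal{R}}}(r)\big| \leq \sqrt{q-1}\,\Bigl(\sum_{r \neq 0}\big|\widehat{\textbf{1}_{\mathcal{R}}}(r)\big|^2\Bigr)^{1/2}.\]
The exact same orthogonality calculation that underlies the integration over $\mathbb{T}$ yields Parseval's identity $\sum_{r \in \mathbb{F}_q}|\widehat{\textbf{1}_{\mathcal{R}}}(r)|^2 = qs$, and subtracting the $r=0$ term $|\widehat{\textbf{1}_{\mathcal{R}}}(0)|^2 = s^2$ gives $\sum_{r \neq 0}|\widehat{\textbf{1}_{\mathcal{R}}}(r)|^2 = s(q-s)$. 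Combining these pieces,
\[\frac{1}{q}\sum_{r \in \mathbb{F}_q}\big|\widehat{\textbf{1}_{\mathcal{R}^c}}(r)\big| \leq \frac{q-s}{q} + \frac{\sqrt{s(q-1)(q-s)}}{q}.\]

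The only arithmetic step is then to verify that $\sqrt{s(q-1)(q-s)} \leq q\sqrt{s}-s$; squaring reduces this to $(q-1)(q-s) \leq q^2 - 2q\sqrt{s}+s$, i.e.\ to $2\sqrt{s}\leq s+1$, which is just AM--GM. This yields the stated bound $\sqrt{s}+1-2s/q$. For the equality statement when $s = 1$, say $\mathcal{R}=\{a\}$, each Fourier coefficient $\widehat{\textbf{1}_{\mathcal{R}}}(r) = \psi(ar)$ has modulus exactly $1$, so the sum over $r \neq 0$ equals $q-1$ on the nose, and Cauchy--Schwarz is tight because AM--GM is tight. I do not anticipate any real obstacle; the only mild subtlety is remembering to reduce to the set $\mathcal{R}$ (of size $s$) rather than applying Cauchy--Schwarz directly to the large set $\mathcal{R}^c$.
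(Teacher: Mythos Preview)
Your proof is correct and follows essentially the same approach as the paper: reduce via Lemma~\ref{l2}, pass from $\mathcal{R}^c$ to $\mathcal{R}$ using orthogonality, and bound $\sum_r |\widehat{\textbf{1}_{\mathcal{R}}}(r)|$ by Cauchy--Schwarz combined with Parseval. The only difference is that the paper applies Cauchy--Schwarz over all of $\mathbb{F}_q$ to get $\sum_r |\widehat{\textbf{1}_{\mathcal{R}}}(r)| \leq q\sqrt{s}$ directly, whereas you exclude $r=0$, obtain the slightly sharper intermediate bound $\sqrt{s(q-1)(q-s)}$, and then spend an extra AM--GM step to arrive at the same conclusion.
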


\begin{proof}
Notice that
\begin{equation*}
\widehat{\textbf{1}_{\mathcal{R}^c}}(r) + \widehat{\textbf{1}_{\mathcal{R}}}(r) = \sum_{n \in \mathbb{F}_q}\psi(rn) =
\begin{cases}
q & \text{ if } r = 0 \\
0 & \text{ if } r \neq 0.
\end{cases}
\end{equation*}
And hence,
\[\sum_{r \in \mathbb{F}_q}|\widehat{\textbf{1}_{\mathcal{R}^c}}(r)| = \sum_{r \in \mathbb{F}_q\backslash 0 }|\widehat{\textbf{1}_{\mathcal{R}}}(r)| + |q-\widehat{\textbf{1}_{\mathcal{R}}}(0)| = \sum_{r \in \mathbb{F}_q}|\widehat{\textbf{1}_{\mathcal{R}}}(r)| + q-2s.\]
It therefore suffices to show that $\sum_{r \in \mathbb{F}_q}| \widehat{\textbf{\textbf{1}}_\mathcal{R}}(r)| \leq q\sqrt{s}$. By the Cauchy-Schwarz inequality,
\begin{align*}
\left( \sum_{r \in \mathbb{F}_q}|\widehat{\textbf{1}_\mathcal{R}}(r)| \right)^2 &\leq \left(\sum_{r \in \mathbb{F}_q} 1 \right)\left( \sum_{r \in \mathbb{F}_q} \left|\sum_{n \in \mathcal{R}}\psi(rn)\right|^2 \right) \\
& = q \sum_{r \in \mathbb{F}_q}\sum_{n_1, n_2 \in \mathcal{R}} \psi(r(n_1-n_2)).
\end{align*}
By swapping the order of summation we see that the total contribution from the terms with $n_1 \neq n_2$ is 0. The terms $n_1= n_2$ contribute $q^2s$ as required.
\end{proof}

The next lemma is similar to Lemma 7 from~\cite{Pol}.

\begin{lemma}  \label{l3}
Let $a, g \in \mathbb{F}_q[t]$ be coprime polynomials with $|a|<|g|$ and $g$ not a power of $t$ and let $d = \deg g > 0$. Then
\[\left| S_\mathcal{R}(a/g) \right| \leq (q-s)^{n-[\frac{n}{d}]}s^{[\frac{n}{d}]}.\]
\end{lemma}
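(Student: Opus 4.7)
The plan is to express $|S_\mathcal{R}(a/g)|$ as a product of Fourier coefficients of $\textbf{1}_{\mathcal{R}^c}$ evaluated at the Laurent coefficients of $a/g$, and then to count how many of these factors have a non-zero argument, using the structure of $a/g$ as a Laurent series.

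First I would factorise $S_\mathcal{R}(a/g)$ exactly as in the proof of Lemma~\ref{l2}: since $|\textbf{e}_q(\cdot)| = 1$, one obtains
\[ |S_\mathcal{R}(a/g)| = \prod_{i=1}^n \left| \widehat{\textbf{1}_{\mathcal{R}^c}}((a/g)_{-i}) \right|. \]
At $r=0$ the factor equals $q-s$, while for $r \ne 0$ the identity $\widehat{\textbf{1}_{\mathcal{R}^c}}(r) = -\widehat{\textbf{1}_{\mathcal{R}}}(r)$ (from the proof of Corollary~\ref{c1}) gives the estimate $|\widehat{\textbf{1}_{\mathcal{R}^c}}(r)| \leq s$.

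The crux is to show that at least $[n/d]$ of the coefficients $(a/g)_{-1}, \ldots, (a/g)_{-n}$ are non-zero. Writing $g = \sum_{j=0}^{d} g_j t^j$ with $g_d = 1$ and comparing coefficients of $t^m$, for $m < 0$, on both sides of $g \cdot (a/g) = a$, one obtains the linear recurrence
\[ (a/g)_{m-d} = -\sum_{j=0}^{d-1} g_j (a/g)_{m-j}. \]
If $d$ consecutive Laurent coefficients of $a/g$ all vanish, then by induction every coefficient further along the tail also vanishes, so $a/g$ reduces to a polynomial in $1/t$; one may then write $a/g = c/t^K$ for some $K \geq 1$ and some $c \in \mathbb{F}_q[t]$, whence $a t^K = c g$, and the coprimality $\gcd(a,g) = 1$ forces $g \mid t^K$, contradicting the assumption that $g$ is not a power of $t$. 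Therefore no $d$ consecutive Laurent coefficients of $a/g$ can vanish. Partitioning $\{1, \ldots, n\}$ into $[n/d]$ consecutive blocks of length $d$ (plus a remainder of length less than $d$) and invoking this fact on each block yields the desired count of at least $[n/d]$ non-zero values among $(a/g)_{-1}, \ldots, (a/g)_{-n}$.

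Combining the two steps, the product contains at least $[n/d]$ factors of size at most $s$ and at most $n - [n/d]$ factors of size $q-s$. Since in the relevant range we have $s \leq q-s$, the bound is maximised at this extreme configuration, giving $|S_\mathcal{R}(a/g)| \leq (q-s)^{n-[n/d]} s^{[n/d]}$, as required. The main obstacle is the recurrence argument ruling out $d$ consecutive zeros among the Laurent coefficients; the Fourier-side bounds are essentially bookkeeping once this is in hand.
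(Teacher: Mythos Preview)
Your proof is correct and follows essentially the same approach as the paper: factor $|S_\mathcal{R}(a/g)|$ into Fourier coefficients, bound each factor by $q-s$ or $s$ according to whether the corresponding Laurent coefficient vanishes, and show at least $[n/d]$ of these coefficients are non-zero by ruling out any run of $d$ consecutive zeros. The only difference is in this last step: the paper observes that $d$ consecutive zeros would force $|\{t^r a/g\}| < q^{-d}$ for some $r$, contradicting $g \nmid t^r a$, whereas you use the linear recurrence coming from $g\cdot(a/g)=a$ to propagate the zeros down the tail and reach the same contradiction via $g \mid t^K$ --- a cosmetic variation of the same idea.
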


\begin{proof}
Write $a/g = \sum_{i<0}x_i t^i$ and let $z$ be the number of non-zeros amongst the $x_i$ in the range $-n\leq i \leq -1$. Then, by our expression for $S_\mathcal{R}(a/q)$ from the start of the proof of Lemma~\ref{l2} we have that \[|S_\mathcal{R}(a/g)|=(q-s)^{n-z} \prod_{\substack{i=0 \\ x_{-i-1} \neq 0}}^{n-1}\left|\sum_{n_i \in \mathcal{R}}\psi(n_ix_{-i-1})\right| \leq (q-s)^{n-z}s^z\] by the triangle inequality. Since $q-s\geq s$, it suffices to show that $z \geq [\frac{n}{d}].$ We use proof by contradiction. Suppose $z \leq [\frac{n}{d}] -1.$ Then, by the pigeonhole principle, there is some string of at least $d$ consecutive zeros in $(x_{-n}, \ldots, x_{-1}).$ Hence, $|\{t^r a/g\}| \leq 1/q^{d+1}$ for some integer $r\geq 0$ where $\{x\} = \sum_{i<0}x_it^i$ denotes the fractional part of $x$. But this is a contradiction since $g$ does not divide $t^r a$ so we must have $|\{t^r a /g \}| \geq 1/q^{d}$.
\end{proof}

\begin{lemma}  \label{l4}
For $d \leq n/2$ we have
\[\sum_{\substack{ \deg a < \deg g \leq d \\ (a,g)=1 }}\left| S_\mathcal{R}\left(\frac{a}{g} \right) \right| \leq (q-s)^{n-2d}(q(1+\sqrt{s})-2s)^{2d}.\]
\end{lemma}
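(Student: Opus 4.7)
The plan is to combine the Fourier factorisation of $|S_\mathcal{R}|$ derived inside the proof of Lemma~\ref{l2} with the non-archimedean Farey spacing for the fractions $a/g$. From that proof we have
\[|S_\mathcal{R}(x)|=\prod_{i=0}^{n-1}\bigl|\widehat{\mathbf{1}_{\mathcal{R}^c}}(x_{-i-1})\bigr|,\]
and the trivial bound $|\widehat{\mathbf{1}_{\mathcal{R}^c}}(r)|\leq q-s$ (by the triangle inequality, since the sum defining $\widehat{\mathbf{1}_{\mathcal{R}^c}}(r)$ has $q-s$ unit-modulus summands) lets me peel off all but the first $2d$ factors, giving the pointwise estimate
\[|S_\mathcal{R}(a/g)|\leq (q-s)^{n-2d}\prod_{i=0}^{2d-1}\bigl|\widehat{\mathbf{1}_{\mathcal{R}^c}}((a/g)_{-i-1})\bigr|.\]

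Next I would show that the map sending a coprime pair $(a,g)$ with $\deg a<\deg g\leq d$ to the tuple $((a/g)_{-1},\ldots,(a/g)_{-2d})\in\mathbb{F}_q^{2d}$ is injective. Indeed, if $a/g$ and $a'/g'$ are two distinct reduced fractions with monic denominators of degree at most $d$, then $ag'-a'g\neq 0$, so
\[\bigl|a/g-a'/g'\bigr|=\frac{|ag'-a'g|}{|gg'|}\geq\frac{1}{q^{2d}},\]
forcing the $2d$-tuples of leading Laurent coefficients to differ. Using this injectivity, the sum over coprime pairs is dominated by a free sum over $\mathbb{F}_q^{2d}$:
\[\sum_{\substack{\deg a<\deg g\leq d\\ (a,g)=1}}\prod_{i=0}^{2d-1}\bigl|\widehat{\mathbf{1}_{\mathcal{R}^c}}((a/g)_{-i-1})\bigr|\leq \left(\sum_{r\in\mathbb{F}_q}\bigl|\widehat{\mathbf{1}_{\mathcal{R}^c}}(r)\bigr|\right)^{\!2d}.\]

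To finish, I would invoke the estimate $\sum_{r\in\mathbb{F}_q}|\widehat{\mathbf{1}_{\mathcal{R}^c}}(r)|\leq q(1+\sqrt{s})-2s$ that was established by the Cauchy--Schwarz computation inside the proof of Corollary~\ref{c1}, and combine it with the two displays above to produce the claimed bound. There is no substantial obstacle: the Fourier factorisation and the Cauchy--Schwarz step are recycled from earlier lemmas, while the only genuinely new input is the injectivity of the coefficient map, which is exactly the non-archimedean Farey spacing and is immediate from $|ag'-a'g|\geq 1$.
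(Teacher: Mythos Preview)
Your proof is correct and is essentially the same argument as the paper's: peel off $n-2d$ factors by the trivial bound, then use Farey spacing to dominate the remaining sum over fractions by the full sum/integral and apply the Cauchy--Schwarz bound from Corollary~\ref{c1}. The only difference is cosmetic---the paper packages the first step via the auxiliary sums $S_\mathcal{R}^Y$ and phrases the second as disjointness of the Farey arcs $\mathcal{F}(a/g,q^{2d})$ together with the integral form of Corollary~\ref{c1}, whereas you unpack both into the explicit product formula and a discrete injectivity argument.
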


\begin{proof}
For any integer $Y$ and $x \in \mathbb{T}$, define \[S_\mathcal{R}^Y(x) = \sum_{m \in \mathcal{M}_\mathcal{R}(Y)}\textbf{e}_q(mx)\]
so that $S_\mathcal{R}(x) = S_\mathcal{R}^n(x).$
Then
\begin{align*}
\left|S_\mathcal{R}^n(x)\right| &= \left|\prod_{i=0}^{n-1}\sum_{n_i \in \mathcal{R}^c }\psi(n_ix_{-i-1}) \right| \\
&= \left|\prod_{i=0}^{Y-1}\sum_{n_i \in \mathcal{R}^c}\psi(n_ix_{-i-1}) \prod_{i=Y}^{n-1}\sum_{n_i \in \mathcal{R}^c}\psi(n_ix_{-i-1})\right| \\
&= \left|S_\mathcal{R}^Y(x)S_\mathcal{R}^{n-Y}(x t^Y)\right|.
\end{align*}
Applying this with $Y = 2d$ gives
\begin{align*}
\sum_{\substack{ \deg a < \deg g \leq d \\ (a,g)=1 }}\left| S_\mathcal{R}\left(\frac{a}{g} \right) \right| &= \sum_{\substack{ \deg a < \deg g \leq d \\ (a,g)=1 }}\left|S_\mathcal{R}^{2d}\left(\frac{a}{g}\right)S_\mathcal{R}^{n-2d}\left( \frac{t^{2d}a}{g}\right)\right| \\
&\leq \max_{\substack{ \deg a < \deg g \leq d \\ (a,g)=1 }}\left| S_\mathcal{R}^{n-2d}\left(\frac{t^{2d}a}{g}\right) \right| \sum_{\substack{ \deg a < \deg g \leq d \\ (a,g)=1 }}\left|S_\mathcal{R}^{2d}\left(\frac{a}{g}\right)\right| \\
&\leq (q-s)^{n-2d}\sum_{\substack{ \deg a < \deg g \leq d \\ (a,g)=1 }}\left|S_\mathcal{R}^{2d}\left(\frac{a}{g}\right)\right|,
\end{align*}
where we have used the trivial bound $|S_\mathcal{R}^{n-2d}(x)| \leq (q-s)^{n-2d}$. Notice that $S_\mathcal{R}^{2d}(a/g + \gamma )$ is constant in the range $|\gamma|<1/q^{2d}$ and recall that the Farey arcs $\mathcal{F}(a/g, q^{2d})$ are disjoint. Therefore
\[\frac{1}{q^{2d}}\sum_{\substack{ \deg a < \deg g \leq d \\ (a,g)=1 }}\left|S_\mathcal{R}^{2d}\left(\frac{a}{g}\right)\right| = \sum_{a,q}\int_{ \mathcal{F}(a/g, q^{2d})}\left|S_\mathcal{R}^{2d}\left(\frac{a}{g}+ \gamma\right)\right|d \gamma \leq (\sqrt{s}+1-2s/q)^{2d}\]
by Corollary~\ref{c1} where the sum is over all distinct fractions $a/q$ with $\deg g \leq d$.
\end{proof}

\begin{lemma}\label{l5}
Let $g \in \mathbb{F}_q[t]$. Then \[\frac{q^{\deg g}}{\phi(g)} = \prod_{\omega|g}\left(1-\frac{1}{q^{\deg \omega}}\right)^{-1} \leq (1+\log_q(\deg g))e.\]
\end{lemma}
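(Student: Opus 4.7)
The equality is immediate from the definition $\phi(g) = |g|\prod_{\omega \mid g}(1-|\omega|^{-1})$ together with $|g| = q^{\deg g}$.

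For the inequality, note that the product depends only on the distinct irreducible divisors of $g$, and the per-degree contribution $(-\log(1-q^{-k}))/k$ strictly decreases in $k$. By a greedy argument (with linear interpolation handling the partial final layer), it suffices to bound the product when $g$ is the product of all irreducibles of degree at most $M$, where $M$ is the largest integer with $D_M := \sum_{k \leq M} k\pi(k) \leq d := \deg g$. The PNT identity $\sum_{k \mid n}k\pi(k) = q^n$ applied at $n = M$ forces $D_M \geq q^M$, hence $M \leq \log_q d$.

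For this extremal $g$, I would expand $-\log(1-q^{-k}) = \sum_{j \geq 1} q^{-jk}/j$ and swap the order of summation via $n = jk$ to obtain
\[
\log\frac{q^{\deg g}}{\phi(g)} = \sum_{n \geq 1}\frac{1}{nq^n}\sum_{\substack{k \mid n \\ k \leq M}}k\pi(k).
\]
The terms with $n \leq M$ contribute $H_M := \sum_{n \leq M}1/n$ exactly, since the constraint $k \leq M$ is then inactive and the inner sum equals $q^n$ by the PNT identity. The remaining terms with $n > M$ form a tail bounded via $\sum_{k \mid n,\,k \leq M}k\pi(k) \leq q^n - n\pi(n) = O(q^{n/2+1})$, again by PNT. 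The claim then reduces to verifying $H_M + \mathrm{tail}_M \leq 1 + \log(M+1)$ uniformly in $M \geq 1$ and $q \geq 2$; combined with $M \leq \log_q d$ and exponentiation this gives the stated bound, and the partial-fill case $D_M < d < D_{M+1}$ follows from the greedy endpoints since $\log(q^{\deg g}/\phi(g))$ is linear in $d$ along each such interval while $1 + \log(1+\log_q d)$ is concave.

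The main delicate step is the final inequality in the worst case $M = 1$, handled by the explicit computation $\mathrm{tail}_1 = -q\log(1-q^{-1}) - 1 \leq 1/(2(q-1)) \leq 1/2 < \log 2$; for $M \geq 2$ the tail decays rapidly in $M$ and $q$ while the slack $1 + \log(M+1) - H_M$ stays bounded below by $1 - \gamma \approx 0.42$, leaving ample room.
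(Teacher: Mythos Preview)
Your greedy reduction is essentially the same as the paper's: both replace the irreducible divisors of $g$ by the smallest available irreducibles, and both end up bounding $\prod_{\deg P\le N}(1-q^{-\deg P})^{-1}$ for the relevant $N$ (your $M$ is the paper's $N-1$). The difference is in how this last product is bounded. The paper uses the one-line estimate $-\log(1-1/x)\le 1/(x-1)$ together with $r\pi(r)\le q^r-1$, which gives
\[
\sum_{r\le N}\pi(r)\bigl(-\log(1-q^{-r})\bigr)\;\le\;\sum_{r\le N}\frac{\pi(r)}{q^r-1}\;\le\;\sum_{r\le N}\frac{1}{r}\;\le\;1+\log N
\]
immediately, with no tail to worry about. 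Your series expansion and divisor-swap recover the same harmonic sum $H_M$ exactly, but at the cost of producing a tail $T_M$ that must then be shown to fit inside the slack $1+\log(M+1)-H_M$; your handling of this for $M\ge 2$ is only asserted. It is true (one can check $T_M$ is decreasing in $M$ via $-\pi(r)\log(1-q^{-r})<\pi(r)/(q^r-1)\le 1/r$, reducing to $T_2$, which is small), but this is precisely the paper's inequality in disguise, so the extra machinery buys nothing. Also note that your interpolation argument does not literally cover $1\le d<q$ (where $M=0$ and the left endpoint $D_0=0$ makes $\log_q d$ degenerate), though this range is easily checked by hand.
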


\begin{proof}
Arrange the monic, irreducibles $\omega_1,\ldots,\omega_r$ dividing $g$ and the monic irreducibles $P_1,\ldots$ in $ \mathbb{F}_q[t] $ in order of degree (ordering those of the same degree arbitrarily). Then we must have that $ \deg P_i \leq \deg \omega_i $. Now, for some $ N $, we have that $ \sum_{P: \deg P \leq N-1} \deg P < \deg g \leq \sum_{P: \deg P \leq N} \deg P $. This implies that $g$ has at most $\pi(N)$ irreducible factors, and so, since $ \deg P_i \leq \deg \omega_i $ we have
\[ \prod_{\omega|g} (1-q^{-\deg \omega})^{-1}  \leq \prod_{P:\deg P \leq N} (1-q^{-\deg P})^{-1} . \]
Taking the logarithm of the right hand side, and using the fact that $ -\log(1-\frac{1}{x}) \leq \frac{1}{x-1} $ for $x>1$, and that $\sum_{d|r}d\pi(d) = q^r$ so $\pi(r)r \leq q^r -1$ we get
\[\sum_{P:\deg P \leq N} -\log(1-q^{-\deg P}) \leq \sum_{r \leq N} \frac{\pi(r)}{q^{r}-1} \leq \sum_{r \leq N} \frac{1}{r} \leq 1+ \log N.\]
Now $N$ is bounded in terms of $ \deg g $ as follows,
\[\deg g > \sum_{P: \deg P \leq N-1} \deg P = \sum_{r \leq N-1} \pi(r) r \geq \sum_{r | N-1} \pi(r) r = q^{N-1},\]
and hence $ N \leq 1+\log_q \deg g$.
Combining these inequalities gives the result.
\end{proof}

\section{Proof of Theorem~\ref{thm}}
Recall that our aim is to evaluate $N(\mathcal{R},n) = \int_{\mathbb{T}}S(x)\overline{S_\mathcal{R}(x)}d x$. Now each $x \in \mathbb{T}$ can be written as $a/g + \gamma$ for unique $a,g,\gamma$ as in Lemma~\ref{l1} which allows us to write
\[ N(\mathcal{R},n) = \int_{\mathbb{T}}\overline{S_\mathcal{R}(x)}\left(\frac{\mu(g)}{\phi(g)}\pi(n)\textbf{e}_q(\gamma t^n)\textbf{1}_{|\gamma| < 1/q^n} + E\right) dx,\]
where $|E|\leq q^{n-\frac{1}{2}[\frac{n}{2}]}$ uniformly. The error term is bounded by using Corollary~\ref{c1} as
\begin{equation}
\left| \int_{\mathbb{T}}\overline{S_\mathcal{R}(x)}E dx \right| \leq q^{n-\frac{1}{2}[\frac{n}{2}]} (\sqrt{s}+1-2s/q)^n.
\end{equation}
We can write what's left as
\[\int_{\mathbb{T}}\overline{S_\mathcal{R}(x)}\frac{\mu(g)}{\phi(g)}\pi(n)\textbf{e}_q(\gamma t^n)\textbf{1}_{|\gamma| < 1/q^n} dx = \sum_{a,g} \int_{\mathcal{F}(a/g,\: q^{ n })}\overline{S_\mathcal{R}\left(\frac{a}{g}+\gamma\right)}\frac{\mu(g)}{\phi(g)}\pi(n)\textbf{e}_q(\gamma t^n) d\gamma \]
where the sum is over all distinct fractions $a/g$ such that $\deg g \leq n/2$. These are the so-called major arcs.

Since $|\gamma| < 1/q^n$, from the definition we get
\[S_\mathcal{R}\left(\frac{a}{g} + \gamma \right) = \sum_{m \in \mathcal{M}_\mathcal{R}(n) }\mathbf{e}_q(am/g)\mathbf{e}_q(m\gamma) = \mathbf{e}_q(\gamma t^n)S_\mathcal{R}\left(\frac{a}{g}\right) \]
and therefore, since the integrand is constant on each of these major arcs which have measure $1/q^n$ the contribution becomes
\begin{equation}
\frac{\pi(n)}{q^n}\sum_{a,g} \overline{S_\mathcal{R} \left(\frac{a}{g} \right)}\frac{\mu(g)}{\phi(g)}.
\end{equation}

Let us first analyse the terms with $g=1$ and $g=t$, that is, look at
\[M=\frac{\pi(n)}{q^n}\left(S_\mathcal{R}(0)+\sum_{b \in \mathbb{F}_q\backslash 0}\overline{S_\mathcal{R} \left(\frac{b}{t} \right)}\frac{\mu(t)}{\phi(t)}\right).\]
The $g=1$ term gives $S_\mathcal{R}(0) = (q-s)^n$. Using our expression for $S_\mathcal{R}(b/t)$ from the start of the proof of Lemma~\ref{l2}, the terms $g=t$ are
\[\sum_{b \in \mathbb{F}_q\backslash 0} S_\mathcal{R}(b/t) = (q-s)^{n-1}\sum_{b \in \mathbb{F}_q\backslash 0 } \sum_{n \in \mathcal{R}^c } \mathbf{e}_q(\frac{n b}{t}) = -(q-s)^{n-1}\sum_{b \in \mathbb{F}_q\backslash 0 } \sum_{r \in \mathcal{R} } \psi(br).\]
Using
\[\sum_{b \in \mathbb{F}_q\backslash 0}\psi(br) = 
\begin{cases}
q-1 &\text{ if } r=0 \\
-1 &\text{ if } r \neq 0
\end{cases}
\]
This becomes
\[\begin{cases}
-(q-s)^{n} & \text{ if } 0 \in \mathcal{R} \\
(q-s)^{n-1}s & \text{ if } 0 \notin \mathcal{R}.
\end{cases}\]
Hence, since $\mu(t) = -1$ and $\phi(t) = q-1$ we have
\begin{align*}
M &= \frac{\pi(n)}{q^n} \left( (q-s)^n -\frac{1}{q-1} \sum_{b \in \mathbb{F}_q\backslash 0}S_\mathcal{R}(b/t)\right) \\
& =  \frac{q\Lambda}{q-1}\pi(n)(1-s/q)^n,
\end{align*}
where
\[\Lambda = \begin{cases}
1 & \text{ if } 0 \in \mathcal{R} \\
1-\frac{1}{q-s} & \text{ if } 0 \notin \mathcal{R}.
\end{cases}\]
Using $\pi(n) \leq q^n/n$, the remaining terms in (4.2) are bounded by

\[\frac{1}{n}\sum_{\substack{ 1\leq \deg g \leq n/2 \\ g\neq t }}\frac{|\mu(g)|}{\phi(g)} \sum_{\substack{\deg a < \deg g \\ (a,g)=1 }}\left| S_\mathcal{R} \left(\frac{a}{g}\right)\right|.\]
Let $U$ be some parameter $1 \leq U \leq n/2$ to be specified shortly. Grouping the $g$ according to their degree and using Lemma~\ref{l3} for the terms with $d = \deg g \leq U $ and Lemmas~\ref{l4} and~\ref{l5} for the terms with $\deg g > U$ we get
\begin{align*}
&\sum_{\substack{1\leq  \deg g \leq n/2\\ g\neq t }}\frac{|\mu(g)|}{\phi(g)} \sum_{\substack{\deg a < \deg g \\ (a,g)=1 }}\left| S_\mathcal{R} \left(\frac{a}{g}\right)\right| \\
&\leq \sum_{1\leq d\leq U}q^d(q-s)^{n - [\frac{n}{d}]} s^{[\frac{n}{d}]} + e\sum_{ U  < d \leq n/2 }q^{-d}(q-s)^{n-2d}(q(1+\sqrt{s})-2s)^{2d}(1 + \log_q(d))\\
& =(q-s)^n \left( \sum_{1 \leq d\leq U} q^d\left(\frac{s}{q-s}\right)^{[\frac{n}{d}]}  + e\sum_{U < d \leq n/2} q^d\left(\frac{1+\sqrt{s}-2s/q}{q-s}\right)^{2d}(1 + \log_q(d)) \right) \\
& \ll (q-s)^n\left( n \left(q^U\left(\frac{s}{q-s}\right)^{n/U} + q^{U/2}\left(\frac{\sqrt{s}+1-2s/q}{q-s}\right)^{U} \right) \right)
\end{align*}
We have trivially bounded the first sum. The bound for the second sum follows after using $1+ \log_q(d) \leq n $ and bounding the resulting geometric sum using $s\leq \sqrt{q}/2$ so that
\[\frac{\sqrt{q}(\sqrt{s}+1-2s/q)}{q-s} \leq \frac{q/2+\sqrt{q}}{q-\sqrt{q}/2}<0.99\] for $q \geq 11$. Taking $U = (2n/5)^{1/2}$ and using $s \leq \sqrt{q}/2$ this becomes
\begin{align*}
&\ll(q-s)^n\left( n \left(q^{\sqrt{\frac{2}{5}n}}\left(\frac{q^{1/2}}{2q-q^{1/2}}\right)^{\sqrt{\frac{5}{2}n}} + q^{\sqrt{\frac{1}{10}n}}\left(\frac{q^{1/4}/\sqrt{2}+1}{q-q^{1/2}/2}\right)^{\sqrt{\frac{2}{5}n}} \right) \right) \\
&\ll n(q-s)^n q^{-n^{1/2}/(2\sqrt{10})},
\end{align*}
since $\sqrt{\frac{2}{5}}-\frac{1}{2}\sqrt{\frac{5}{2}} = -\frac{1}{2\sqrt{10}}$ and $\sqrt{\frac{1}{10}}-\frac{3}{4}\sqrt{\frac{2}{5}} = -\frac{1}{2\sqrt{10}}$. Combining this with our expression for the main term $M$ and error estimate (4.1) we get
\begin{equation}
N(\mathcal{R},n) =  \frac{ q }{q-1}\frac{(q-s)^n}{n}\left( \Lambda + O\left( n \mathcal{E} \right) \right)
\end{equation}
where
\begin{equation}
\mathcal{E} \ll q^{-n^{1/2}/(2\sqrt{10})} + \left(\frac{q^{3/4}(s^{1/2}+1)}{q-s}\right)^n.
\end{equation}
Since $s \leq \sqrt{q}/2$, we then have
\begin{equation}
\mathcal{E} \ll q^{-n^{1/2}/(2\sqrt{10})} + \left(\frac{q/\sqrt{2} + q^{3/4}}{q-\sqrt{q}/2}\right)^n.
\end{equation}
A calculation reveals that for $n \geq 100(\log q)^2$, the first expression is larger then the second when $q \geq 500$ and that both are $ \ll q^{-n^{1/2}/7}/n$ which completes the proof of Theorem~\ref{thm}.

\begin{remark}
The conditions on the sizes of $s$, $q$ and $n$ were made in order to simplify the statement of Theorem~\ref{thm} but (4.4) is also interesting for other choices. For example, when $n$ is fixed, we have that $\mathcal{E} \rightarrow 0 $ as $q \rightarrow \infty$ provided $s = o(q^{1/2}).$

Recall that in the special case $s=1$, we have equality in Corollary~\ref{c1}. Feeding this through the rest of the proof gives
\[\mathcal{E} \ll q^{-n^{1/2}/(2\sqrt{10})} + \left( \frac{q^{3/4}(2-2/q)}{q-1} \right)^n.\]
For $q\geq 17$, the expression in the brackets is less than 1 which proves $n\mathcal{E} \rightarrow 0$ as $n \rightarrow \infty$ in this case.
\end{remark}

\section{Proof of Theorem~\ref{thm2}}

Our proof of Theorem~\ref{thm2} is the same as Theorem~\ref{thm} except that we use modified versions of Corollary~\ref{c1} and Lemma~\ref{l3} which we will now prove. In this section, we assume that $p$ is a prime, $\mathcal{R} \subset \mathbb{F}_p$ is subset of consecutive coefficients and use the same notation already introduced.

\begin{corollary}\label{c2}
\[\int_{\mathbb{T}}|S_\mathcal{R}(x)|dx \leq (\log p + 1 - s/p)^n.\]
\end{corollary}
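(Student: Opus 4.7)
The idea is to reduce, via Lemma~\ref{l2}, to bounding the average Fourier mass $\frac{1}{p}\sum_{r \in \mathbb{F}_p}\left|\widehat{\textbf{1}_{\mathcal{R}^c}}(r)\right|$. Applying the identity
\[\widehat{\textbf{1}_{\mathcal{R}^c}}(r) + \widehat{\textbf{1}_{\mathcal{R}}}(r) = \begin{cases} p & \text{if } r=0 \\ 0 & \text{if } r \neq 0 \end{cases}\]
already used in the proof of Corollary~\ref{c1}, the $r=0$ term contributes exactly $(p-s)/p = 1 - s/p$, while $|\widehat{\textbf{1}_{\mathcal{R}^c}}(r)| = |\widehat{\textbf{1}_{\mathcal{R}}}(r)|$ for the remaining $r$. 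It therefore suffices to prove
\[\sum_{r=1}^{p-1}\left|\widehat{\textbf{1}_\mathcal{R}}(r)\right| \leq p\log p.\]

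Since $\mathcal{R} = \{r_0, r_0+1, \ldots, r_0+s-1\}$ consists of $s$ consecutive elements of $\mathbb{F}_p$, summing the resulting geometric series in $\psi$ gives the explicit formula
\[\left|\widehat{\textbf{1}_\mathcal{R}}(r)\right| = \frac{|\sin(\pi s r/p)|}{|\sin(\pi r/p)|}\]
for $r \neq 0$. Bounding the numerator by $1$ and using the symmetry $r \leftrightarrow p-r$, the task reduces to estimating $\sum_{r=1}^{(p-1)/2} 1/\sin(\pi r/p)$. I would treat the $r=1$ term using $\sin(\pi/p) \geq 2/p$, and the remaining terms by monotonicity and comparison with the integral $\int_1^{(p-1)/2}\csc(\pi x/p)\,dx$, exploiting the antiderivative $(p/\pi)\log\tan(\pi x/(2p))$ together with the elementary bound $\tan(\pi/(2p)) \geq \pi/(2p)$. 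The outcome is
\[\sum_{r=1}^{p-1}\left|\widehat{\textbf{1}_\mathcal{R}}(r)\right| \leq p + \frac{2p}{\pi}\log p.\]

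The main obstacle is that this estimate is not manifestly of the required form, because the leading coefficient on $\log p$ is $2/\pi < 1$ but there is a stray additive $p$ from the $r=1$ term. Nevertheless, the inequality $p + (2/\pi)p\log p \leq p\log p$ is equivalent to $\log p \geq \pi/(\pi - 2)$, that is $p \geq e^{\pi/(\pi-2)} \approx 15.7$. Since Theorem~\ref{thm2} assumes $p$ is sufficiently large in terms of $\delta$, this condition is comfortably met and the corollary follows.
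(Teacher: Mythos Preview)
Your approach is essentially the paper's: apply Lemma~\ref{l2}, pass to $\widehat{\mathbf 1}_{\mathcal R}$, sum the geometric series, and bound $\sum_{r=1}^{p-1}|\sin(\pi r/p)|^{-1}$. The only difference is how that last sum is estimated. The paper simply applies Jordan's inequality $\sin x\geq 2x/\pi$ on $[0,\pi/2]$ to every term, obtaining
\[
\sum_{r=1}^{p-1}\frac{1}{|\sin(\pi r/p)|}\leq 2\sum_{r=1}^{(p-1)/2}\frac{p}{2r}=p\,H_{(p-1)/2}<p\log p,
\]
the final step being the elementary midpoint bound $H_m<\log(2m+1)$ with $m=(p-1)/2$. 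This yields the corollary for \emph{every} prime $p$, whereas your integral-comparison route produces the extra additive $p$ and hence only delivers the stated inequality once $p\geq 17$. As you observe, this is harmless for the application to Theorem~\ref{thm2}; but the corollary as written carries no hypothesis on $p$, so the paper's slightly slicker bound is needed to prove it in full.
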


\begin{proof}
Write $\mathcal{R} = \{d, d+1, \ldots, d+s-1 \}$, then, if $r=0$, $|\widehat{\textbf{1}_{\mathcal{R}^c}}(r)| = p-s$, and if $r \neq 0$,
\[|\widehat{\textbf{1}_{\mathcal{R}^c}}(r)| = \left| \sum_{k=d}^{d+s-1}e^{2\pi i kr/p} \right| = \left|\frac{1-e^{2 \pi i sr/p}}{1-e^{2\pi i r/p}}\right| \leq \frac{1}{|\sin\pi r/p|}.\]
Therefore,
\[\sum_{r \in \mathbb{F}_p}|\widehat{\textbf{1}_{\mathcal{R}^c}}(r)| \leq p-s + \sum_{r=1}^{p-1}\frac{1}{|\sin \pi r/p|}< p-s+2\sum_{r=1}^{\frac{p-1}{2}}\frac{p}{2 r}<p-s+p\log p .\]
Now use Lemma~\ref{l2}.
\end{proof}

Consequently, the bound in Lemma~\ref{l4} is replaced by $(p-s)^{n-2n}(p(\log p + 1) - s)^{2n}.$

\begin{lemma}\label{l6}
Let $a, g \in \mathbb{F}_p[t]$ be coprime polynomials with $|a|<|g|$ and $g$ not a power of $t$ and let $d = \deg g > 0$. Then
\[\left| S_\mathcal{R}(a/g) \right| \leq (p-s)^{n}e^{-[\frac{n}{d}]\frac{1}{p^3}}.\]
\end{lemma}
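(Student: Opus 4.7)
The plan is to mimic the proof of Lemma~\ref{l3}, strengthening its pointwise bound $|\widehat{\textbf{1}_{\mathcal{R}^c}}(r)| \leq s$ (for $r \neq 0$) by exploiting that $\mathcal{R}^c$, being the complement mod $p$ of an interval, is itself an arithmetic progression of consecutive residues.

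As at the start of the proof of Lemma~\ref{l2}, write $|S_\mathcal{R}(a/g)| = \prod_{i=0}^{n-1}|\widehat{\textbf{1}_{\mathcal{R}^c}}(x_{-i-1})|$, where $a/g = \sum_{i<0}x_i t^i$, and let $z$ be the number of indices $0 \leq i \leq n-1$ with $x_{-i-1} \neq 0$. The pigeonhole argument from Lemma~\ref{l3}---which requires only $\gcd(a,g)=1$ and that $g$ is not a power of $t$---still yields $z \geq [n/d]$. Since each zero coefficient contributes the factor $\widehat{\textbf{1}_{\mathcal{R}^c}}(0) = p-s$, the lemma reduces to establishing the pointwise Fourier estimate
\[|\widehat{\textbf{1}_{\mathcal{R}^c}}(r)| \leq (p-s)\,e^{-1/p^3} \qquad \text{for every } r \in \mathbb{F}_p^\times,\]
from which $|S_\mathcal{R}(a/g)| \leq (p-s)^{n-z}\bigl((p-s)e^{-1/p^3}\bigr)^z \leq (p-s)^n e^{-[n/d]/p^3}$.

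To prove this estimate, open the square:
\[(p-s)^2 - |\widehat{\textbf{1}_{\mathcal{R}^c}}(r)|^2 = \sum_{n_1,n_2 \in \mathcal{R}^c}\left(1-e^{2\pi i r(n_1-n_2)/p}\right) = 4\sum_{\substack{n_1<n_2\\ n_1,n_2\in\mathcal{R}^c}}\sin^2\left(\frac{\pi r(n_2-n_1)}{p}\right).\]
By translation invariance of $|\widehat{\textbf{1}_{\mathcal{R}^c}}(r)|$, we may assume $\mathcal{R}^c = \{s,s+1,\dots,p-1\}$, so there are exactly $p-s-1$ pairs with $n_2-n_1=1$. Retaining only these and using the standard bounds $\sin(\pi r/p) \geq \sin(\pi/p) \geq 2/p$ for $r \in \{1,\dots,p-1\}$ give
\[(p-s)^2 - |\widehat{\textbf{1}_{\mathcal{R}^c}}(r)|^2 \geq 16(p-s-1)/p^2 \geq 8(p-s)/p^2,\]
where the last inequality uses $p-s \geq 2$ (ensured by $p-s > p^{3/4+\delta}$). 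Dividing through by $(p-s)^2$ and applying $\sqrt{1-x} \leq e^{-x/2}$ yields $|\widehat{\textbf{1}_{\mathcal{R}^c}}(r)| \leq (p-s)e^{-4/p^3} \leq (p-s)e^{-1/p^3}$. The only nontrivial new ingredient is this Fourier inequality; the combinatorial pigeonhole from Lemma~\ref{l3} transfers unchanged.
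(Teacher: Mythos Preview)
Your proof is correct and follows essentially the same strategy as the paper's: reduce to the pointwise Fourier estimate $|\widehat{\textbf{1}_{\mathcal{R}^c}}(r)| \leq (p-s)e^{-1/p^3}$ for $r\neq 0$, then invoke the pigeonhole bound $z\geq [n/d]$ from Lemma~\ref{l3}. The only cosmetic difference is in how that Fourier estimate is obtained: the paper isolates a single adjacent pair $n,n+1$ in $\mathcal{R}^c$, bounds $|e^{2\pi i rn/p}+e^{2\pi i r(n+1)/p}|<2e^{-1/p^2}$, and triangle-inequalities the remaining $p-s-2$ terms, whereas you expand $(p-s)^2-|\widehat{\textbf{1}_{\mathcal{R}^c}}(r)|^2$ and retain all $p-s-1$ adjacent-pair contributions---yielding the slightly sharper $e^{-4/p^3}$ before relaxing to $e^{-1/p^3}$. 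Both arguments hinge on the same observation (consecutive residues in $\mathcal{R}^c$ force cancellation) and both tacitly require $p-s\geq 2$, which you correctly note is guaranteed by the ambient hypothesis $p-s>p^{3/4+\delta}$.
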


\begin{proof}
As in the proof of Lemma~\ref{l3} we have \[|S_\mathcal{R}(a/g)|=(p-s)^{n-z} \prod_{\substack{i=0 \\ x_{-i-1} \neq 0}}^{n-1}\left|\sum_{n_i \in \mathcal{R}}e^{2\pi i(n_ix_{-i-1})/p}\right|.\]
For $x \in \mathbb{F}_p \backslash \{0\}$, we have
\[\left|e^{2 \pi i \frac{x}{p}n} + e^{2 \pi i \frac{x}{p}(n+1)}\right|^2 = 2+2\cos(\frac{2\pi x}{p}) < 4 e^{-2/p^2},\]
and therefore
\[\left|\sum_{n_i \in \mathcal{R}}e^{2\pi i(n_ix_{-i-1})/p}\right| \leq p-s-2+2e^{-1/p^2} \leq (p-s)e^{-1/p^3}.\]
Recalling from the proof of Lemma~\ref{l3} that $z \geq [n/d]$ completes the proof.
\end{proof}

Provided $p$ is large enough to ensure that $\frac{\sqrt{p}(\log p + 1 - s/p)}{p-s}<1$ (so the resulting geometric sum we saw earlier converges) we may just insert these new bounds into the proof of Theorem~\ref{thm} to get (4.3) with
\[\mathcal{E} \ll p^U e^{-[\frac{n}{U}]\frac{1}{p^3}} + \left(\frac{\sqrt{p}(\log p + 1 - s/p)}{p-s}\right)^{U} + \left(\frac{p^{3/4}(\log p + 1 - s/p)}{p-s}\right)^n \]
for some parameter $U$. Taking $U = cn^{1/2}$, and since we are assuming $p-s > p^{3/4+\delta}$, this proves Theorem~\ref{thm2} for some $c>0$ sufficiently small in terms of $p$ and $\delta$.

\section*{Acknowledgements}\label{ackref}
The author would like to thank his supervisor, Andrew Granville, for his encouragement and suggestions and Simon Myerson for his helpful comments. This work was supported by the Engineering and Physical Sciences Research Council EP/L015234/1 via the EPSRC Centre for Doctoral Training in Geometry and Number Theory (The London School of Geometry and Number Theory), University College London.

\end{document}